\newcommand{\mbb}[1]{\mathbb #1}
\newcommand{\ms}[1]{\mathscr #1}
\newcommand{\bound}{d}
\def\<{\left<}
\def\>{\right>}
\newcommand{\PP}{\mathbb P}
\newcommand{\ZZ}{\mathbb Z}
\newcommand{\ov}[1]{\overline{#1}}
\newcommand{\til}[1]{\widetilde{#1}}
\DeclareMathOperator{\Spec}{Spec}
\DeclareMathOperator{\per}{per}
\DeclareMathOperator{\ind}{ind}
\DeclareMathOperator{\id}{id}
\theoremstyle{plain}
\newtheorem{thm}{Theorem}[section]
\newtheorem{defn}[thm]{Definition}
\newtheorem{lem}[thm]{Lemma}
\newtheorem{cor}[thm]{Corollary}
\newtheorem{prop}[thm]{Proposition}
\newtheorem*{thm*}{Theorem}
\newtheorem*{rem*}{Remark}
\newtheorem*{lem*}{Lemma}
\newtheorem*{cor*}{Corollary}
\newtheorem*{prop*}{Proposition}
\newtheorem{rem}[thm]{Remark}
\title[Torsion in Chow groups of zero cycles]{Torsion in Chow groups of
zero cycles of homogeneous projective varieties}
\author{Daniel Krashen}
\begin{document}


\maketitle

\begin{abstract}
We give bounds on the order of torsion in the Chow group of
zero dimensional cycles for isotropic Grassmannians and
Brauer-Severi flag varieties. To do this, we introduce tools to understand
the behavior of torsion in Chow groups with coefficients under morphisms of
proper varieties
\end{abstract}

\section{Introduction}

The problem of understanding torsion in the Chow group of projective
homogeneous varieties has had rich and surprising applications. A
topic of much investigation, it has given important insight into the study
of central simple algebras and quadratic forms (see \cite{BZZ,
Kar:TC2, Kar:TQuad, Kar:CGS, KarMer:CGPQ, Mer:H1K2}). The Chow group of
zero dimensional cycles plays an important role both due to its
connection with rational points, and because of its connections to the
structure of Chow groups in other dimensions (for example, see
\cite{KarMer:SNV}).

In the last few years significant progress has been made by showing that
for certain classes of projective homogeneous varieties, the Chow groups of
zero dimensional cycles are torsion free \cite{Kra:ZC, PSZ, ChMe:ZC}.
Despite these results, many gaps in our understanding still remain. 

In this paper, we introduce an alternative approach, relying on Rost's
theory of Chow groups with coefficients as developed in \cite{Rost:CGC}, in
order to obtain results for new classes of projective homogeneous
varieties. Instead of proving the non-existence of torsion for zero
dimensional cycles, we give bounds on the order of torsion.

\bigskip

Given a geometrically rational variety $X$ defined over a field $k$, the
Chow group of zero dimensional cycles of degree $0$ is always a torsion
group. More precisely, it is not hard to see by a standard
restriction-corestriction argument, that if $X_L$ is a rational variety for
$L/k$ a finite field extension, then the order of every element of this
group divides $[L:k]$. On the other hand, in many cases, we know that these
bounds on the torsion are not sharp. In fact, for many classes of
projective homogeneous varieties, it is shown in \cite{ChMe:ZC,Kra:ZC,
PSZ}, that this group is trivial in many cases despite the varieties
themselves being far from rational. It is natural to ask, therefore, in
which cases does this group possess nontrivial torsion, and how can one give
bounds on its order?

In this paper, we show for certain classes of varieties, namely isotropic
Grassmannians of quadratic forms, and Brauer-Severi flag varieties for
algebras of period $2$, one may consistently achieve better bounds on
torsion than would be implied by the above restriction-corestriction
argument.

 It is also an interesting and open question to
understand to what extent torsion in Chow groups of zero dimensional cycles
on one homogeneous variety contributes to the torsion in the Chow groups of
higher dimensional cycles of other homogeneous varieties via natural Chow
correspondences.

\section*{Acknowledgments}

The impetus for the writing of this paper came out of conversations between
myself and Kirill Zainouilline during the Thematic Program on Torsors,
Nonassociative Algebras and Cohomological Invariants held at the Fields
institute and the University of Ottawa. Many thanks also go to my father,
Stephen Krashen, for a generous supply of coffee during the writing of this
paper. This research was partially supported by NSF grants DMS-1007462 and
DMS-1151252.

\section{Notation and Preliminaries}

Let $k$ be a field. We let $K_\bullet ^M(k)$ denote the Milnor $K$-theory
ring of $k$. We will make frequent use of the machinery of Rost's ``Chow
groups with coefficients,'' as defined in \cite{Rost:CGC}.  We recall in
particular, that a cycle module $M$ (see \cite[Def.2.1]{Rost:CGC}), defines for
every field extension $L/k$, a graded Abelian group $M(L) = M_\bullet(L)$,
which is a graded module for the Milnor $K$-theory ring $K_\bullet ^M (k)$.
These are also equipped with, restriction maps $r_{L/k}: M(k) \to M(L)$ for
every field extension $L/k$ and for \textit{finite} field extensions $L/k$,
corestriction maps $c_{L/k}: M(L) \to M(k)$ which are compatible with the
corresponding maps in Milnor $K$-theory (see \cite[Def.~1.1]{Rost:CGC}). 

For a cycle module $M$ and a variety $X$, setting 
\[C_p(X; M, q) = \coprod_{x \in X_{(p)}} M_q(k(x)),\] 
we obtain a complex
\[\xymatrix{
 \ar[r] & C_{p+1}(X; M, q+1) \ar[r]^-{\bound_X} & C_p(X; M, q)
 \ar[r]^-{\bound_X} &  C_{p-1}(X; M, q-1) \ar[r] &
}\]
Following \cite[(3.2)]{Rost:CGC}, we denote the kernel of the map $C_p(X;
M, q) \to C_{p-1}(X; M, q-1)$ by $Z_p(X; M, q)$, and the homology at the
middle term by $A_p(X; M, q)$. For a class $\alpha \in Z_p(X; M, q)$, we
let $[\alpha]$ denote its class in $A_p(X; M, q)$. 

We refer to a pair $(z, m)$ where $z \in X_{(p)}$ and $m \in M_q(z)$ as a
\textit{prime chain}, and identify it with $m \in M_q(z) \subset \coprod_{x
\in X_{(p)}} M_q(k(x)) = C_p(X; M, q)$. If $\bound_X (z, m) = 0$, we refer to
it also as a \textit{prime cycle}. We will occasionally also need to use
the notation $(z, m)_X$ if it is necessary to make the variety $X$
explicit.

We recall that the functors $C_p(\mbox{\ \ \ } ; M, q)$, $Z_p(\mbox{\ \ \ } ;
M, q)$, and $A_p(\mbox{\ \ \ } ; M, q)$ are covariant with respect to
proper maps. If $\pi : X \to Y$ is proper, then we denote all the
corresponding pushforward maps by $\pi_*$. For a prime chain $(x, m) \in
C_p(X; M, q)$, these are defined by $\pi_*(x, m) = 0$ if $k(x)/k(\pi(x))$
is not a finite field extension, and otherwise, we set $\pi_*(x, m) =
(\pi(x), c_{k(x)/k(\pi(x))}(m))$.

\begin{defn}
Let $X$ be a proper variety.  We let $\ov{A}_0(X; M, q)$ denote the kernel
of the map $A_0(X; M, q) \to A_0(\Spec k; M, q) = M_q(k)$ induced by the
pushforward for the structure map $X \to \Spec k$.
\end{defn}

\begin{defn}
Let $X$ be a variety over $k$. We write $A_p(X, \mbb Z)$ for $A_p(X;
K_\bullet^M, 0) = CH_p(X)$, and $\ov A_0(X, \mbb Z)$ for $\ov A_0(X;
K_\bullet^M, 0)$.
\end{defn}
We note that this notation differs somewhat from that used in
\cite{Kra:ZC}, but is more consistent with the notation of \cite{Rost:CGC}.

\begin{defn}
Let $M$ be a cycle module over $k$, let $F/k$ be an arbitrary field
extension, and let $X/F$ be a variety. Let $M_F$ denote the cycle module
restricted to the extension fields of $F$. We define $t(X; M, q)$ to be
elements of $K^M_\bullet(k)$ which annihilate $\ov{A}_0(X; M_F, q)$. That
is:
\[t(X; M, q) = ann_{K^M_\bullet(k)} \ov{A}_0(X; M_F, q).\]
We set $t_0(X; M, q) = t(X; M, q) \cap K^M_0 = t(X; M, q) \cap \ZZ$.
\end{defn}

For a morphism $\pi: X \to Y$, and $W \to Y$ a morphism, we let $X_W$
denote the fiber product $X \times_Y W$.

\begin{defn}
Let $\pi : X \to Y$ be a proper morphism. We set $t(\pi_{(p)}, M)$
(respectively $t_0(\pi_{(p)}, M)$) to be the intersection of the ideals
$t(X_y, M_{k(y)})$ (resp. $t_0(X_y, M_{k(y)})$) as $y$ ranges over all the
points of $Y$ of dimension $p$.
\end{defn}

\begin{defn}
Let $X$ be a variety over $k$. We define $\ind(X)$ to be the greatest
common divisor of the degrees of all finite field extensions $L/k$ such
that $X(L) \neq \emptyset$.
\end{defn}

\begin{defn}
Let $\pi : X \to Y$ be a morphism. We define $\ind_p(\pi)$ to be the least
common multiple of all integers of the form $\ind(X_y)$ over $y \in
Y_{(p)}$.
\end{defn}

\section{Behavior of torsion in Chow groups under morphisms}

\begin{lem} \label{multiple lifts}
Let $\pi: X \to Y$ be a morphism. Then 
\[(\ind_p(\pi)) C_p(Y; M, q) \subset \pi_* C_p(X; M, q).\]
\end{lem}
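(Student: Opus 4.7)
The plan is to reduce the containment to a single prime chain sitting over a point $y \in Y_{(p)}$, and then to lift that chain by combining closed points of the fiber $X_y$ via Bezout's identity together with the restriction-corestriction formula for cycle modules. Since $C_p(Y;M,q)$ decomposes as $\coprod_{y \in Y_{(p)}} M_q(k(y))$, and since $\ind(X_y)$ divides $\ind_p(\pi)$ by the very definition of the latter as a least common multiple, it suffices to show that for each $y \in Y_{(p)}$ and each $m \in M_q(k(y))$, the element $\ind(X_y)\cdot (y,m)$ lies in $\pi_* C_p(X;M,q)$.

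The next step is to reinterpret $\ind(X_y)$ as the greatest common divisor of the degrees $[k(x):k(y)]$ as $x$ ranges over the closed points of $X_y$: any $L$-point of $X_y$ with $L/k(y)$ finite factors through the residue field of its image, and that image is automatically a closed point of $X_y$ since its residue field is finite over $k(y)$. By Bezout I can then choose finitely many closed points $x_1,\dots,x_s \in X_y$ and integers $a_i \in \ZZ$ with $\sum a_i [k(x_i):k(y)] = \ind(X_y)$. Each such $x_i$, viewed as a point of $X$, lies in $X_{(p)}$, since $k(x_i)/k(y)$ is finite and hence $\mathrm{trdeg}_k k(x_i) = \mathrm{trdeg}_k k(y) = p$.

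With these points in hand I would form the lift
\[
\alpha = \sum_{i} a_i\,\bigl(x_i,\ r_{k(x_i)/k(y)}(m)\bigr) \in C_p(X;M,q).
\]
Since $\pi(x_i) = y$ with $k(x_i)/k(y)$ finite, the recipe for $\pi_*$ on prime chains combined with the standard cycle module identity $c_{L/k(y)} \circ r_{L/k(y)} = [L:k(y)]\cdot \id$ yields $\pi_*\alpha = \bigl(y,\ \ind(X_y)\cdot m\bigr)$, which is precisely the desired lift. The argument is essentially formal: no real obstacle appears, the only matters demanding a moment's attention being the transcendence-degree check placing $x_i$ in $X_{(p)}$, and the vacuous case $X_y = \emptyset$, in which $\ind(X_y) = 0$ and the containment is automatic.
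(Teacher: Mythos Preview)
Your proposal is correct and follows essentially the same approach as the paper: reduce to a single prime chain $(y,m)$, choose closed points $x_i$ of the fiber $X_y$ whose residue degrees have $\gcd$ dividing $\ind_p(\pi)$, and push forward $(x_i, r_{k(x_i)/k(y)}(m))$ using $c \circ r = [k(x_i):k(y)]\cdot\id$. You are slightly more explicit than the paper in invoking Bezout, verifying that $x_i \in X_{(p)}$, and handling the empty-fiber case, but the argument is the same.
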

\begin{proof}
To prove the statement, it is enough to show that
for every prime chain (i.e. each chain consisting of a single term) $(y,
m)$, $y \in Y_{(p)}$ and $m \in M_q(k(y))$, that $\ind_p(\pi)(y, m) \in
\pi_* C_p(X; M, q)$. By definition of the index, we may find a collection of
points $x_1, \ldots, x_r \in X_{(p)}$ lying over $y$ such that $x_i \cong
Spec(L_i)$ and $\gcd\{[L_i: k(y)]\} | \ind_p(\pi)(y, m)$. We will show that
$[L_i : k(y)] (y, m) \in \pi_* C_p(X; M, q)$, which will complete the proof.

To do this, we simply consider $(x_i, r_{L_i/k(y)}(m)) \in C_p(X; M, q)$.
By definition of the map $\pi_*$, we find $\pi_* (x_i, r_{L_i/k(y)}(m)) =
(y, [L_i:k(y)] m) = [L_i:k(y)] (y, m)$ as desired.
\end{proof}

\begin{lem} \label{trivial pushforward}
If $\alpha \in \ker\left( \pi_* : A_p(X; M, q) \to A_p(Y; M, q) \right)$,
then there exists $\til \alpha \in Z_p(X; M, q)$ such that $[\til \alpha] =
(\ind_{p+1} \pi)\alpha$ and 
\[\til \alpha \in \ker\left( \pi_*: Z_p(X; M, q) \to Z_p(Y; M, q)
\right).\]
\end{lem}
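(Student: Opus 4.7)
The plan is to start from a cycle representative $\beta \in Z_p(X; M, q)$ of the class $\alpha$, track its pushforward through a bounding chain on $Y$, and then lift that bounding chain back to $X$ after multiplying by $\ind_{p+1}\pi$, using Lemma \ref{multiple lifts}. The desired $\tilde \alpha$ will be $(\ind_{p+1}\pi)\beta$ corrected by the boundary of the lift.

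Concretely, first I would pick any $\beta \in Z_p(X; M, q)$ with $[\beta] = \alpha$. Since $\pi_*\alpha = 0$ in $A_p(Y; M, q)$, the cycle $\pi_*\beta \in Z_p(Y; M, q)$ must be a boundary, so there exists $\gamma \in C_{p+1}(Y; M, q+1)$ with $\bound_Y \gamma = \pi_* \beta$. By Lemma \ref{multiple lifts} applied in degree $p+1$, the element $(\ind_{p+1}\pi)\gamma$ lies in $\pi_* C_{p+1}(X; M, q+1)$, so I can choose $\tilde\gamma \in C_{p+1}(X; M, q+1)$ with $\pi_* \tilde\gamma = (\ind_{p+1}\pi)\gamma$.

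Next I would set
\[\tilde\alpha = (\ind_{p+1}\pi)\beta - \bound_X \tilde\gamma \in C_p(X; M, q).\]
Using that $\bound_X^2 = 0$ and $\bound_X \beta = 0$, one checks $\bound_X \tilde\alpha = 0$, so $\tilde\alpha \in Z_p(X; M, q)$. Since $\bound_X \tilde\gamma$ is a boundary, passing to homology gives $[\tilde\alpha] = (\ind_{p+1}\pi)[\beta] = (\ind_{p+1}\pi)\alpha$. Finally, using that $\pi_*$ commutes with the differential,
\[\pi_* \tilde\alpha = (\ind_{p+1}\pi) \pi_* \beta - \bound_Y \pi_* \tilde\gamma = (\ind_{p+1}\pi)\bound_Y \gamma - \bound_Y\bigl((\ind_{p+1}\pi)\gamma\bigr) = 0,\]
which gives the vanishing of $\pi_* \tilde\alpha$ in $Z_p(Y; M, q)$ as required.

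There is no real obstacle here beyond making sure the bookkeeping is consistent: the only nontrivial input is Lemma \ref{multiple lifts} (which produces the chain-level lift $\tilde\gamma$), and the rest is a diagram chase using that the proper pushforward is a map of complexes. The only subtle point to keep in mind is that we \emph{cannot} in general lift $\gamma$ on the nose --- only after multiplying by $\ind_{p+1}\pi$ --- which is precisely the source of the factor $\ind_{p+1}\pi$ appearing in the statement.
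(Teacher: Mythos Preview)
Your proof is correct and is essentially identical to the paper's own argument: choose a representative, lift the bounding chain on $Y$ after multiplying by $\ind_{p+1}\pi$ via Lemma~\ref{multiple lifts}, and subtract its boundary. The only difference is notational (and you correctly state $[\tilde\alpha] = (\ind_{p+1}\pi)\alpha$, whereas the paper's proof contains a small typo writing $[\tilde\alpha] = \alpha$).
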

\begin{proof}
Choose a representative $\til \beta$ for $\alpha$ in $Z_p(X; M, q)$. By
definition, since the class of $\pi_* \til \beta$ is $0$, we may find
$\gamma \in C_{p+1}(Y; M, q+1)$ such that $\bound_Y \gamma = \pi_* \til
\beta$. By Lemma~\ref{multiple lifts}, we may find a cycle $\gamma' \in
C_{p+1}(X; M, q+1)$ such that $\pi_* \gamma' = (\ind_{p+1}) \gamma$.
Defining $\til \alpha = (\ind_{p+1}) \til \beta - \bound_X \gamma'$, we
then have that $[\til \alpha] = [\til \beta] = \alpha$, and by \cite[Prop.
4.6(1)]{Rost:CGC}, we have
\[ \pi_* \til \alpha = (\ind_{p+1}) \pi_* \til \beta - \pi_* \bound_X
\gamma' = (\ind_{p+1}) \pi_* \til \beta - (\ind_{p+1})\bound_Y \gamma =
0,\]
as desired.
\end{proof}

\begin{lem} \label{zero cycles trivial pushforward}
Suppose we have a morphism of proper varieties $\pi: X \to Y$, and $\alpha
\in Z_0(X; M, q)$ such that $\pi_* \alpha = 0$ in $Z_0(X; M, q)$. Then we
may find a reduced scheme $W$, finite over $k$, and an inclusion $j : W \to
Y$, such that if we consider the fiber product diagram:
\[\xymatrix{
X_W \ar[r]^i \ar[d]_{\pi_W} & X \ar[d]^\pi \\
W \ar[r]_j & Y,
}\]
then we have $\alpha = i_* \beta$ for some $\beta \in \ker\left((\pi_W)_* :
Z_0(X_W; M, q) \to Z_0(W; M, q)\right)$.
\end{lem}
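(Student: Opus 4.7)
The plan is to take $W$ to be the reduced closed subscheme of $Y$ supported on the image of the support of $\alpha$, and to let $\beta$ be the tautological lift of $\alpha$ to the fiber product $X_W$. Because $Z_0 = C_0$ (the complex terminates, so the outgoing boundary vanishes automatically), essentially everything reduces to bookkeeping with residue fields.

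In detail, first I would write $\alpha$ as a finite formal sum of prime chains $\alpha = \sum_s (x_s, m_s)$ with $x_s \in X_{(0)}$ and $m_s \in M_q(k(x_s))$. Since $\pi$ is proper, each image $y_s := \pi(x_s)$ is a closed point of $Y$, and $k(x_s)/k(y_s)$ is a finite extension. I would then set
\[
W = \coprod_{y \in \{y_s\}} \Spec k(y),
\]
viewed as a reduced closed subscheme of $Y$ via the natural closed immersion $j : W \hookrightarrow Y$; this $W$ is finite over $k$ by construction.

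Next, since $x_s$ lies over $y_s$, the fiber square produces a canonical closed point $\til x_s \in X_{y_s} \subset X_W$ with $k(\til x_s) = k(x_s)$ and $i(\til x_s) = x_s$. I would define
\[
\beta = \sum_s (\til x_s, m_s) \in C_0(X_W; M, q) = Z_0(X_W; M, q).
\]
Because the residue fields agree at every $\til x_s$, the formula defining $i_*$ gives $i_*(\til x_s, m_s) = (x_s, m_s)$, so $i_* \beta = \alpha$.

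It remains to check that $(\pi_W)_* \beta = 0$ in $Z_0(W; M, q)$. Here I would use functoriality of proper pushforward (compatibility with the fiber square) to compute
\[
j_* (\pi_W)_* \beta = \pi_* i_* \beta = \pi_* \alpha = 0.
\]
Since $j$ is a closed immersion and induces an isomorphism on residue fields at every closed point of $W$, the map $j_* : C_0(W; M, q) \to C_0(Y; M, q)$ is injective on prime chains, forcing $(\pi_W)_* \beta = 0$, as required. The only place one could trip up is in keeping track of residue fields at the points $\til x_s$ and $y_s$ so that no spurious multiplicities appear from the corestriction maps defining $i_*$ and $j_*$; once that is confirmed, no genuine obstacle remains.
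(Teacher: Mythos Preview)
Your proposal is correct and is essentially the same argument as the paper's: take $W$ to be the reduced image of the support of $\alpha$, use that $i$ and $j$ are closed immersions so that $i_*$ and $j_*$ are injective on $C_0$, lift $\alpha$ tautologically to $\beta$, and conclude $(\pi_W)_*\beta=0$ from $j_*(\pi_W)_*\beta=\pi_*i_*\beta=\pi_*\alpha=0$. The paper states this more tersely, but the content is identical.
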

\begin{proof}
We note that by the definition of a cycle complex \cite[Def.
3.2]{Rost:CGC}, it follows immediately that the pushforward is injective on
the level of complexes for closed immersions. In fact, we may naturally
identify the cycle complex of a closed subscheme with a subcomplex of the
cycle complex of the overscheme. Consequently, it suffices to show that we
may find $\beta \in C_0(X_W; M, q)$ with $i_* \beta = \alpha$, and the
final conclusion will follow automatically. But this now follows by setting
$W$ to be the image of the support of the $0$-dimensional chain $\alpha$.
\end{proof}

\begin{lem} \label{zero cycles pushforward again}
Let $\pi : X \to Y$ be a morphism of proper $k$-varieties. Then we have
\[ \left[\ker\left(\pi_*: A_0(X; M, q) \to A_0(Y; M, q)\right) \right]
(\ind_1 \pi) t(\pi_{(0)}; M, q) = 0.\]
\end{lem}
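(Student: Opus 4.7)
The plan is to chain the two preceding lemmas in sequence, then invoke the intersection definition of $t(\pi_{(0)}; M, q)$ after decomposing $W$ into its closed points.

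First, given $\alpha \in \ker(\pi_* : A_0(X;M,q) \to A_0(Y;M,q))$, I would apply Lemma~\ref{trivial pushforward} with $p=0$ to produce a representative $\til\alpha \in Z_0(X;M,q)$ of $(\ind_1 \pi)\alpha$ whose pushforward already vanishes at the chain level in $Z_0(Y;M,q)$. Lemma~\ref{zero cycles trivial pushforward} then applies to $\til\alpha$ and supplies a reduced finite $k$-scheme $W$, an inclusion $j : W \hra Y$, and a $\beta \in Z_0(X_W; M, q)$ satisfying $i_*\beta = \til\alpha$ and $(\pi_W)_*\beta = 0$ in $Z_0(W;M,q)$.

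Since $W$ is reduced and zero-dimensional over $k$, I would then decompose it as the disjoint union of its finitely many closed points $w$, each mapping to a point $y := j(w) \in Y_{(0)}$. Correspondingly, $X_W = \coprod_w X_{j(w)}$ and $\beta = \sum_w \beta_w$ with $\beta_w \in Z_0(X_{j(w)}; M_{k(j(w))}, q)$. The chain-level vanishing $(\pi_W)_* \beta = 0$ forces the structural pushforward of each $\beta_w$ into $M_q(k(j(w)))$ to be zero, so $[\beta_w] \in \ov A_0(X_{j(w)}; M_{k(j(w))}, q)$. By the defining intersection $t(\pi_{(0)}; M, q) = \bigcap_{y \in Y_{(0)}} t(X_y; M_{k(y)}, q)$, every $a \in t(\pi_{(0)}; M, q) \subset K^M_\bullet(k)$ annihilates each $[\beta_w]$. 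Using the $K^M_\bullet(k)$-linearity of proper pushforward (standard in Rost's framework), I would apply $i_*$ to conclude $a \cdot [\til\alpha] = \sum_w i_*(a \cdot [\beta_w]) = 0$ in $A_0(X;M,q)$; substituting $[\til\alpha] = (\ind_1\pi)\alpha$ delivers the desired $a \cdot (\ind_1\pi) \cdot \alpha = 0$.

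I do not anticipate a serious obstacle: once Lemmas~\ref{trivial pushforward} and~\ref{zero cycles trivial pushforward} are in hand, the proof reduces to bookkeeping across the disjoint-union decomposition of $X_W$ and the standard module compatibility of Rost's pushforward. The one point worth highlighting is that the membership $[\beta_w] \in \ov A_0$ is obtained from the \emph{chain-level} vanishing supplied by Lemma~\ref{zero cycles trivial pushforward}, which is strictly stronger than the class-level vanishing in the definition of $\ov A_0$; without this chain-level precision, the argument would not go through.
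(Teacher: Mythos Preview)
Your proposal is correct and follows essentially the same route as the paper: apply Lemma~\ref{trivial pushforward}, then Lemma~\ref{zero cycles trivial pushforward}, then use the definition of $t(\pi_{(0)};M,q)$ to kill $[\til\alpha]$. The paper compresses your decomposition of $W$ and the per-component verification that $[\beta_w]\in\ov A_0(X_{j(w)};M_{k(j(w))},q)$ into the single phrase ``from the definition of $t$,'' but what you have written is exactly the unpacking of that sentence.
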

\begin{proof}
Let $\alpha \in \ker\left(\pi_*: A_0(X; M, q) \to A_0(Y; M, q)\right)$. By
Lemma~\ref{trivial pushforward}, we may find $\til \alpha \in Z_0(X; M, q)$
with $[\til \alpha] = (\ind_1 \pi) \alpha$ and $\pi_* \til \alpha = 0$. By
Lemma~\ref{zero cycles trivial pushforward}, we may find a reduced scheme
$W$, finite over $k$, and an inclusion $j : W \to Y$, such that if we set
$i : X_W \to X$ to be the natural fiber product map, we have $\til \alpha =
i_* \til \beta$ for some $\til \beta \in Z_0(X_W; M, q)$. From the
definition of $t$, it follows that we have $t(\pi_{(0)}, M) [\til \alpha] =
0$.  All together this tells us:
\[ t(\pi_{(0)}, M) (\ind_1 \pi) \alpha = t(\pi_{(0)}, M) [\til \alpha] = 0,
\]
as desired.
\end{proof}

\begin{prop} \label{fiber induction proposition}
Let $\pi : X \to Y$ be a morphism of proper varieties over $k$. Then
\[t(Y; M, q) t(\pi_{(0)}; M, q) \ind_1(\pi) \subset t(X; M, q), \text{ and
} \ind_0(\pi) t(X; M, q) \subset  t(Y; M, q)\]
\end{prop}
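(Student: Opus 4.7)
The plan is to prove the two inclusions separately. Both follow by combining the preceding lemmas with the fact that proper pushforward on Rost's cycle complexes is a map of $K^M_\bullet(k)$-modules (the projection formula for Chow groups with coefficients).

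For the first inclusion $t(Y;M,q)\,t(\pi_{(0)};M,q)\,\ind_1(\pi) \subset t(X;M,q)$, I would fix $a \in t(Y;M,q)$, $b \in t(\pi_{(0)};M,q)$, and an arbitrary $\alpha \in \ov A_0(X;M,q)$, and aim to show that $ab\,\ind_1(\pi)\cdot\alpha = 0$. By the projection formula, $\pi_*(a\cdot\alpha) = a\cdot\pi_*\alpha$. Since $\pi$ is proper, $\pi_*\alpha \in \ov A_0(Y;M,q)$, and then the hypothesis on $a$ gives $a\cdot\pi_*\alpha = 0$. Thus $a\cdot\alpha$ lies in the kernel of $\pi_* : A_0(X;M,*) \to A_0(Y;M,*)$, and Lemma~\ref{zero cycles pushforward again} yields $\ind_1(\pi)\,b\,(a\cdot\alpha) = 0$, which is the first inclusion.

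For the second inclusion $\ind_0(\pi)\,t(X;M,q) \subset t(Y;M,q)$, the idea is to lift $0$-cycles from $Y$ to $X$ after multiplication by $\ind_0(\pi)$. Given $a \in t(X;M,q)$ and $\beta \in \ov A_0(Y;M,q)$, pick a representative $\til\beta \in Z_0(Y;M,q) = C_0(Y;M,q)$ (the cycle condition is automatic in dimension zero), and apply Lemma~\ref{multiple lifts} to obtain $\til\gamma \in C_0(X;M,q)$ with $\pi_*\til\gamma = \ind_0(\pi)\,\til\beta$. The class $\gamma = [\til\gamma] \in A_0(X;M,q)$ then satisfies $\pi_*\gamma = \ind_0(\pi)\,\beta$, and pushing further to $\Spec k$ shows $\gamma \in \ov A_0(X;M,q)$ using that $\beta$ is in $\ov A_0$. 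Consequently $a\cdot\gamma = 0$, and applying $\pi_*$ with the projection formula once more gives $0 = \pi_*(a\cdot\gamma) = a\cdot\ind_0(\pi)\,\beta$, as required.

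The main technical point to watch is the bookkeeping with the $K^M_\bullet$-grading: the ideals $t(-;M,q)$ are graded and the module action shifts $q$, so one must be sure that Lemma~\ref{zero cycles pushforward again} is applied in the correct graded degree of the kernel containing $a\cdot\alpha$. This tracking works out exactly as in the proof of Lemma~\ref{zero cycles pushforward again} itself, and the geometric content of the proposition is otherwise fully absorbed in Lemmas~\ref{multiple lifts} and~\ref{zero cycles pushforward again}; the argument amounts to propagating annihilation conditions across $\pi$ via the projection formula.
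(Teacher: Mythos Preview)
Your proposal is correct and follows essentially the same route as the paper: for the first inclusion you use $\pi_*\alpha\in\ov A_0(Y;M,q)$ to get $a\cdot\alpha$ into the kernel of $\pi_*$ and then invoke Lemma~\ref{zero cycles pushforward again}, and for the second you lift via Lemma~\ref{multiple lifts} and push back down. Your write-up is in fact slightly more careful than the paper's---you explicitly invoke the projection formula, justify why the lift $\gamma$ lands in $\ov A_0$, and flag the grading bookkeeping---but the underlying argument is the same.
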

\begin{proof}
Let $\alpha \in \ov A_0(X; M, q)$. We need to show that 
\[t(Y; M, q) t(\pi_{(0)}; M, q) \ind_1(\pi) \alpha = 0.\]
Since $\pi_* \alpha \in \ov A_0(Y; M, q)$, it follows that $t(Y; M, q)
\pi_*\alpha = \pi_* (t(Y; M, q) \alpha) = 0$. But now by Lemma~\ref{zero
cycles pushforward again}, we conclude that $t_0(\pi, M) (\ind_1 \pi) t(Y,
M) \alpha = 0$ as desired.

Next, suppose that $\beta \in \ov A_0(Y; M, q)$. We need to show that 
\[t(X; M, q) \ind_0(\pi) \beta = 0.\] 
By Lemma~\ref{multiple lifts}, we may find $\gamma \in \ov A_0(X; M, q)$
with $\pi_* \gamma = (\ind_0 \pi)\beta$.  Since $t(X; M, q) \gamma = 0$,
the desired conclusion follows.
\end{proof}

\begin{rem} \label{corestriction base change}
Let $X$ be a variety and $x \in X$ a closed point with residue field $L =
k(x)$. Let $m \in M(L)$. Then the prime chain $(x, m)_X$ may be identified
with the pushforward of the corresponding prime chain $(x, m)_{X_L}$.

We can see this as follows. We may view $x$ as both a morphism $\Spec(L)
\to X$ or $\Spec(L) \to X_L$. To remove ambiguity, we will temporarily
denote the first by $x$ and the second by $x'$. We have a commutative
diagram:
\[\xymatrix{
	\Spec(L) \ar[r]^-{x'} \ar[rd]_x & X_L \ar[d]^p \\
	 & X.
}\]
Identifying $A_0(\Spec(L), M) = M(L)$, we have $x_*(m) = (x, m)_X$ and
$x'_*(m) = (x', m)_{X_L}$. It therefore follows from the commutativity of
the diagram that $p_*(x', m)_{X_L} = (x, m)_X$ as claimed.
\end{rem}

\begin{lem} \label{rat tor}
Suppose that $X$ and $Y$ are proper varieties over $k$ such that $X_{k(Y)}$
and $Y_{k(X)}$ are both rational varieties, then $A_0(X; M) \cong A_0(Y;
M)$.
\end{lem}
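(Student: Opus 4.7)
The plan is to construct mutually inverse maps between $A_0(X; M)$ and $A_0(Y; M)$ using correspondences arising from the two rationality hypotheses. Since $X_{k(Y)}$ is rational, it admits a $k(Y)$-rational point; by Remark~\ref{corestriction base change}, this point corresponds to a closed point $\eta \in X \times Y$ with residue field $k(Y)$. I would let $Z \subset X \times Y$ be its Zariski closure and denote by $\pi_Y \colon Z \to Y$ and $\pi_X \colon Z \to X$ the restrictions of the two projections; then $\pi_Y$ is proper birational (an isomorphism over a dense open of $Y$, since it induces the identity on function fields) and $\pi_X$ is proper. Symmetrically, the rationality of $Y_{k(X)}$ yields a closed subvariety $W \subset X \times Y$ with analogous projections $\rho_X, \rho_Y$ such that $\rho_X \colon W \to X$ is proper birational.

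From these correspondences I would define maps. Set $\phi_{YX} \colon A_0(Y; M) \to A_0(X; M)$ to be the composition $A_0(Y; M) \xleftarrow{\sim} A_0(Z; M) \xrightarrow{(\pi_X)_*} A_0(X; M)$, where the first arrow is the inverse of the isomorphism induced by $(\pi_Y)_*$, and symmetrically define $\phi_{XY}$ via $W$. Justifying that $(\pi_Y)_* \colon A_0(Z; M) \to A_0(Y; M)$ is an isomorphism uses Lemma~\ref{multiple lifts} for surjectivity (working over the locus where $\pi_Y$ is an isomorphism, so that $\ind_0$ is trivial there) and Lemma~\ref{zero cycles pushforward again} for injectivity, controlling the kernel by $\ind_1(\pi_Y) \cdot t((\pi_Y)_{(0)}; M, q)$, which birationality is meant to force to be trivial.

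Finally, I would check that $\phi_{XY}$ and $\phi_{YX}$ are mutually inverse. The compositions $\phi_{XY} \circ \phi_{YX}$ and $\phi_{YX} \circ \phi_{XY}$ are induced by composition of the correspondences $Z$ and $W$, yielding self-correspondences of $Y$ and of $X$ respectively. The rationality hypotheses ensure that the relevant generic fibers of these composite correspondences are rational, so that these self-correspondences act as the identity on $A_0$ by the same mechanism as in the first paragraph.

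The main obstacle is the sub-claim that proper birational pushforward between proper varieties induces an isomorphism on $A_0$ with cycle-module coefficients. While Lemma~\ref{zero cycles pushforward again} controls the kernel by an annihilator involving $\ind_1$ and $t(\pi_{(0)})$, the exceptional locus of a birational morphism can meet $1$-dimensional points of $Y$, so birationality alone does not immediately force this annihilator to be trivial. Producing a clean isomorphism without circular reliance on the present lemma will likely require either an induction on dimension — applying the lemma itself to the lower-dimensional fibers over exceptional points — or a moving argument pushing zero-cycle representatives into the isomorphism locus.
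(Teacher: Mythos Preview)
The paper's proof is quite different and much shorter. It sets $Z = X \times Y$ and observes that, since $X_{k(Y)}$ is rational, $Z$ is birational to $Y \times \mbb P^{\dim X}$, and symmetrically birational to $X \times \mbb P^{\dim Y}$. Two citations then finish the job: birational invariance of $A_0(-; M)$ for proper varieties \cite[Cor.~RC.13]{KarMer:SNV}, and $A_0(X \times \mbb P^n; M) \cong A_0(X; M)$ from \cite[Thm.~3.3(b)]{Ful:IT}. Chaining these gives $A_0(X; M) \cong A_0(X \times Y; M) \cong A_0(Y; M)$. Note that this uses the rationality hypothesis in full strength, whereas your construction of the correspondence $Z$ uses only that $X_{k(Y)}$ has a $k(Y)$-point.

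Your self-diagnosed obstacle --- that a proper birational morphism induces an isomorphism on $A_0$ with cycle-module coefficients --- is precisely the content of \cite[Cor.~RC.13]{KarMer:SNV}, which the paper invokes as a black box rather than proving internally. Lemma~\ref{zero cycles pushforward again} does not yield this on its own, since the fibers over the exceptional locus of a birational map carry no a priori bound on $t$ or on $\ind$; your suggested induction or moving-lemma fixes would amount to reproving the cited result. There is also a second, independent gap: the claim that the composite correspondence $W \circ Z$ acts as the identity on $A_0(Y; M)$ is not justified --- the rationality of $X_{k(Y)}$ and of $Y_{k(X)}$ says nothing directly about the generic fiber of the composite over $Y$. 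The paper's route avoids this issue entirely, because each step in its chain is already an isomorphism and no cancellation argument is needed.
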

\begin{proof}
In this case, if we consider $Z = X \times Y$, we see that $Z$ is
birationally isomorphic to $X \times \mbb P^{\dim Y}$ and to $Y \times \mbb
P^{\dim X}$. The result therefore follows from
\cite[Cor.~RC.13]{KarMer:SNV} and \cite[Thm.~3.3(b)]{Ful:IT}.
\end{proof}

\begin{cor} \label{geometrically rational torsion}
Suppose that $X$ is geometrically rational. Then $\ov A_0(X; M)$ is torsion.
\end{cor}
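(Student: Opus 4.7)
The plan is to implement the standard restriction--corestriction argument sketched in the introduction, supplying the vanishing for a rational base change via Lemma \ref{rat tor}.

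First, I would fix a finite field extension $L/k$ over which $X$ becomes rational. Since $X$ is geometrically rational, the birational equivalence $X_{\ov{k}} \simeq \mbb P^n_{\ov{k}}$ is defined over a finitely generated subextension of $\ov{k}/k$, which, being algebraic and finitely generated, is automatically finite; thus one obtains a finite extension $L/k$ with $X_L$ rational over $L$.

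Second, I would show that $\ov A_0(X_L; M_L) = 0$ by applying Lemma \ref{rat tor} over $L$ with the proper $L$-varieties $X_L$ and $Y = \Spec L$. Indeed, $(X_L)_{k(Y)} = X_L$ is rational by the choice of $L$, while $Y_{k(X_L)} = \Spec k(X_L)$ is a zero-dimensional proper variety, trivially rational over its own function field. Lemma \ref{rat tor} then identifies $A_0(X_L; M_L) \cong A_0(\Spec L; M_L) = M(L)$ via the structure pushforward, so $\ov A_0(X_L; M_L) = 0$.

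Third, I would transfer this vanishing back to $X$ through the finite flat base change morphism $\pi : X_L \to X$ of degree $[L:k]$. For $\alpha \in \ov A_0(X; M)$, functoriality of pushforward through the commutative square formed by the structure maps $X_L \to X$ and $\Spec L \to \Spec k$ identifies the image of $\pi^{*}\alpha$ under the structure pushforward $A_0(X_L; M) \to M(L)$ with the flat pullback to $L$ of the image of $\alpha$ in $M(k)$; the latter vanishes by hypothesis, so $\pi^{*}\alpha \in \ov A_0(X_L; M_L) = 0$. The projection formula $\pi_{*}\pi^{*} = [L:k] \cdot \mathrm{id}$ for the finite flat morphism $\pi$ then yields $[L:k]\alpha = 0$, showing $\ov A_0(X; M)$ is $[L:k]$-torsion.

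The main technical subtlety lies in applying Lemma \ref{rat tor} to a zero-dimensional $Y = \Spec L$, where the birationalities $Z \simeq X \times \mbb P^{\dim Y}$ and $Z \simeq Y \times \mbb P^{\dim X}$ in that lemma's proof collapse (the first to the identity, the second to the rationality hypothesis on $X_L$); this causes no issues. The flat pullback $\pi^{*}$ and projection formula $\pi_{*}\pi^{*} = [L:k]$ for finite flat morphisms are standard in the cycle-module machinery of \cite{Rost:CGC}, so the real content of the corollary is the rational-variety vanishing provided by Lemma \ref{rat tor}.
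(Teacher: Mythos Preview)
Your proposal is correct and follows essentially the same approach as the paper: the paper's proof is a one-line invocation of ``the standard restriction-corestriction argument and the fact that $\ov A_0(X_{\ov k}, M_{\ov k})$ is trivial by Lemma~\ref{rat tor},'' and your three steps simply make that sketch precise, including the (necessary) descent from $\ov k$ to a finite subextension $L/k$. Your remark about the degenerate application of Lemma~\ref{rat tor} with $Y=\Spec L$ is also accurate and harmless.
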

\begin{proof}
This follows from the standard restriction-corestriction argument and the
fact that $\ov A_0(X_{\ov k}, M_{\ov k})$ is trivial by
Lemma~\ref{rat tor}.
\end{proof}

Following \cite[Def.~3.15]{Kra:ZC}, we say that a morphism $\pi: X \to Y$
of $k$-varieties has rational fibers if for every field extension $L/k$
and every $L$-point $y \in Y(L)$, the scheme theoretic fiber $X_y$ is
rational.
\begin{cor} \label{rational fibers}
Suppose that $\pi : X \to Y$ is morphism of proper
$k$-varieties with rational fibers. Then for any cycle module $M$, we have
$A_0(X; M, q) = A_0(Y; M, q)$.
\end{cor}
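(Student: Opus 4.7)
The plan is to verify that $\pi_* \colon A_0(X; M, q) \to A_0(Y; M, q)$ is an isomorphism. First, the rational fibers hypothesis forces $\ind_p(\pi) = 1$ for every $p$: for any point $y$ of $Y$, the fiber $X_y$ is rational over $k(y)$ and therefore admits a $k(y)$-point, so its index is $1$. For the same reason, $\ov A_0(X_y; M_{k(y)}, q) = 0$ whenever $X_y$ is a rational variety over $k(y)$, so $t(X_y; M_{k(y)}, q) = K^M_\bullet(k)$ and consequently $t(\pi_{(0)}; M, q) = K^M_\bullet(k)$.

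Surjectivity is immediate on prime chains: given $(y, m) \in C_0(Y; M, q)$, a $k(y)$-point of $X_y$ determines a closed point $x$ of $X$ with $\pi(x) = y$ and $k(x) \cong k(y)$, and then $\pi_*(x, m)_X = (y, m)_Y$. Thus $\pi_*$ is surjective already on $C_0$, and \emph{a fortiori} on $A_0$. For injectivity I take $\alpha \in A_0(X; M, q)$ with $\pi_* \alpha = 0$. Since $\ind_1(\pi) = 1$, Lemma~\ref{trivial pushforward} furnishes a representative $\til\alpha \in Z_0(X; M, q)$ with $\pi_* \til\alpha = 0$ in $Z_0(Y; M, q)$, and Lemma~\ref{zero cycles trivial pushforward} then factors $\til\alpha = i_* \til\beta$ with $\til\beta \in \ker((\pi_W)_*) \subset Z_0(X_W; M, q)$ for some reduced finite $k$-scheme $W = \coprod_i \Spec L_i$. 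Decomposing along the components of $W$ produces $\til\beta = \sum_i \til\beta_i$ with $\til\beta_i \in Z_0(X_{L_i}; M_{L_i}, q)$ and $(\pi_{L_i})_* \til\beta_i = 0$, so the problem reduces to showing $[\til\beta_i] = 0$ in $A_0(X_{L_i}; M_{L_i}, q)$ for each $i$.

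For this last step I intend to invoke Lemma~\ref{rat tor} over the base field $L_i$, applied to the pair $(X_{L_i}, \Spec L_i)$: since $X_{L_i}$ is rational over $L_i$, the lemma identifies $A_0(X_{L_i}; M_{L_i}, q)$ with $M(L_i) = A_0(\Spec L_i; M_{L_i}, q)$, and once this identification is known to coincide with $(\pi_{L_i})_*$, the vanishing $(\pi_{L_i})_* \til\beta_i = 0$ will force $[\til\beta_i] = 0$. I expect this compatibility between the Lemma~\ref{rat tor} isomorphism and $(\pi_{L_i})_*$ to be the main technical point, since an abstract isomorphism of two groups together with a split surjection between them need not make the surjection injective. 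The compatibility must therefore be extracted by tracing through the birational correspondences and projective bundle formula underlying Lemma~\ref{rat tor}: one identifies $A_0(X_{L_i}; M_{L_i}, q) \cong A_0(\PP^{\dim X_{L_i}}_{L_i}; M_{L_i}, q)$ via a rational model of $X_{L_i}$, and combines it with the explicit computation $A_0(\PP^n_{L_i}; M_{L_i}, q) \cong M(L_i)$ realized by the structural pushforward.
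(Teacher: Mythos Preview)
Your approach is different from the paper's and ultimately circuitous. The paper does not attempt to show that $\pi_*$ is an isomorphism fiber by fiber. Instead it observes that since the \emph{generic} fiber of $\pi$ is rational, the function field $k(X)$ is purely transcendental over $k(Y)$, so $X$ is birationally isomorphic to $Y \times \PP^n$. One then applies \cite[Cor.~RC.13]{KarMer:SNV} (birational invariance of $A_0$ with coefficients for proper varieties) together with the projective bundle identification $A_0(Y \times \PP^n; M, q) \cong A_0(Y; M, q)$ from \cite[Thm.~3.3(b)]{Ful:IT}, and the result follows in one line.

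Your reduction via Lemmas~\ref{trivial pushforward} and~\ref{zero cycles trivial pushforward} is valid, but it only reduces the statement to the special case $Y = \Spec L$, i.e.\ to the assertion that $\ov A_0(Z; M_L, q) = 0$ for a proper rational $L$-variety $Z$. At that point you still need exactly the two cited results (birational invariance and the projective bundle formula), so the detour through the fiberwise machinery buys nothing.

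You are right to flag the compatibility issue: an abstract isomorphism $A_0(Z; M_L) \cong M(L)$ does not by itself force $(\pi_Z)_*$ to be injective. The fix is that the isomorphism of \cite[Cor.~RC.13]{KarMer:SNV} is realized through a chain of proper $L$-morphisms, and for any proper $L$-morphism $f: Z_1 \to Z_2$ one has $(\pi_{Z_2})_* \circ f_* = (\pi_{Z_1})_*$; combined with the fact that the projective bundle identification $A_0(\PP^n_L; M_L) \cong M(L)$ is literally the structural pushforward, this gives the compatibility. You should state this explicitly rather than leaving it as an expectation --- as written, your proof has a genuine (if small) gap at precisely the point where the actual content lies.
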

\begin{proof}
This follows from the fact that since the generic fiber of $\pi$ is
rational, we have $k(X)$ is totally transcendental over $k(Y)$. This implies
that $X$ is birationally isomorphic to $Y \times \mbb P^{\dim X}$, and the
result then follows from \cite[Cor.~RC.13]{KarMer:SNV} and
\cite[Thm.~3.3(b)]{Ful:IT}.
\end{proof}

Let $G$ be a $k$-linear algebraic group acting on a $k$-variety $X$.
Following \cite[p.~243]{HHK}, we say that $G$ \textit{acts transitively on
the points of $X$} if, for every field extension $L/k$, $G(L)$ acts
transitively on $X(L)$. 

\begin{prop} \label{homogeneous chow trivial}
Let $G$ be a $k$-rational linear algebraic group acting on proper
$k$-varieties $X$ and $Y$, and suppose $\pi: X \to Y$ is a $G$-equivariant
morphism. Suppose that $G$ acts transitively on the points of $Y$. Then for
$y \in Y(k)$, the inclusion:
\[i : X_y \to X\]
induces an surjective map $i_* : A_0(X_y, M, q) \to A_0(X, M, q)$.
\end{prop}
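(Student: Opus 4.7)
The plan is to show that every generator $[(P, m)] \in A_0(X; M, q)$, with $P$ a closed point of $X$ and $m \in M_q(L)$ where $L = k(P)$, lies in the image of $i_*$. The strategy is to construct a proper morphism $\phi_P : \mbb P^1_L \to X$ sending the two $L$-rational points $0$ and $1$ to an $L$-point of $X_y$ and to $P$ respectively, and then extract the rational equivalence from a standard $1$-cycle on $\mbb P^1_L$.

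First, set $L' = k(\pi(P)) \subseteq L$ and view $\pi(P)$ and $y$ as two $L'$-points of $Y$. Transitivity of $G$ on $Y(L')$ yields $g \in G(L')$ with $g \cdot y_{L'} = \pi(P)_{L'}$, and by $G$-equivariance of $\pi$, the element $P'' := g^{-1} \cdot P$ is an $L$-point of $X_y$; write $\bar P''$ for the underlying closed point. To connect $P''$ to $P = g \cdot P''$ by a rational curve, I would use $k$-rationality of $G$ (hence $L'$-rationality of $G_{L'}$) to build a rational map $\phi : \mbb P^1_{L'} \dra G$, defined at $0$ and $1$, with $\phi(0) = e_{L'}$ and $\phi(1) = g$: fix a dense open $V \subseteq G_{L'}$ isomorphic to an open of $\mbb A^n_{L'}$, translate by an element of $G$ so that both $e_{L'}$ and $g$ lie in $V(L')$, and let $\phi$ be the affine line between these two points. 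Base-changing $\phi$ along $L' \hookrightarrow L$ and composing with the orbit map $h \mapsto h \cdot P''$ of the $G$-action yields a rational map $\phi_P : \mbb P^1_L \dra X$ with $\phi_P(0) = P''$ and $\phi_P(1) = P$. Since $X$ is proper and $\mbb P^1_L$ is smooth, $\phi_P$ extends uniquely to a morphism; since $L/k$ is finite, this morphism is proper over $k$.

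The rational equivalence then follows from a short $1$-cycle calculation. Taking $u = t(t-1)^{-1} \in L(t)^\times$, the element $\{u\} \cdot r_{L(t)/L}(m) \in M_{q+1}(L(t)) = C_1(\mbb P^1_L; M, q+1)$ has boundary $(0, m) - (1, m) \in C_0(\mbb P^1_L; M, q)$ by Rost's tame-symbol formula: $r_{L(t)/L}(m)$ has trivial residue at every closed point, and $v_0(u) = 1$, $v_1(u) = -1$, with $v_x(u) = 0$ for all other closed points of $\mbb P^1_L$. Pushing forward along $\phi_P$ and invoking compatibility of proper pushforward with the boundary \cite[Prop.~4.6(1)]{Rost:CGC} exhibits the difference $(P, m) - i_*\bigl(\bar P'',\, \cores_{L/k(\bar P'')}(m)\bigr)$ as a boundary in $C_0(X; M, q)$, so $[(P, m)] = i_*\bigl[\bigl(\bar P'',\, \cores_{L/k(\bar P'')}(m)\bigr)\bigr]$ in $A_0(X; M, q)$, which is what is needed.

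I expect the main obstacle to be the construction of $\phi : \mbb P^1_{L'} \dra G$ passing through $e_{L'}$ and $g$: the ``translate a chart so both points lie in $V$'' trick works cleanly over infinite $L'$, but over a finite $L'$ (or when $G(k)$ is sparse) one may have to replace the affine line by a higher-dimensional rational subvariety of $G$, or first pass to a sufficiently large infinite extension and then corestrict the resulting equivalence back.
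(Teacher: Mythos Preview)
Your proof is correct and follows essentially the same route as the paper's: use transitivity to find $g$ carrying $\pi(P)$ to $y$, use rationality of $G$ to connect $e$ and $g$ by a rational curve, compose with the orbit map to get a curve in $X$ (the paper does this in $X_L$ and then pushes down), and push forward a standard boundary from $\mathbb{P}^1_L$. The paper is slightly less explicit---it invokes the exact sequence of \cite[Prop.~2.2]{Rost:CGC} rather than writing down $\{u\}\cdot r_{L(t)/L}(m)$, and it simply asserts ``since $G$ is $R$-trivial'' without constructing the curve---so the finite-field concern you flag is equally present and equally unaddressed there.
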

\begin{proof}
Suppose that we have a prime cycle of the form $(z, m)$ where $z \in
X_{(0)}$ is a closed point with residue field $L = k(z)$ and $m \in
M_q(L)$. It will suffice to show that $(z, m)$ is equivalent to a prime
cycle of the form $(x, n)$ for some $x$ with $\pi(x) = y$. As in
Remark~\ref{corestriction base change}, if we set $p_X : X_L \to X$
(respectively $p_Y : Y_L \to Y$) to be the natural projection maps, then
regarding $z$ also as a point of $X_L$, we have $(z, m)_X = (p_X)_*
(z,m)_{X_L}$.

Since $G$ acts transitively on the points of $Y$, we may find a $g \in
G(L)$ such that $g(\pi(z)) = y_L$. Since $G$ is $R$-trivial, we may find a
rational curve $\varphi : \PP^1_L \dashrightarrow G_L$ such that $\varphi(0)=
\id_G$, $\varphi(1) = g$. We may then define a new rational curve $\psi :
\PP^1_L \dashrightarrow X_L$ in $X_L$ by $\psi(t) = \varphi(t)(z)$, and by
the properness of $X_L$, $\psi$ extends to a morphism.

Consider the exact sequence obtained using \cite[Prop. 2.2]{Rost:CGC}:
\[C_1(\mbb P^1_L; M_L, q) \overset{d}\to C_0(\mbb
P^1_L; M_L, q) \overset{r_*}\to M(L), \]
where $r$ is the structure morphism for the $L$-scheme $\mbb P^1_L$.
Since the cycle $(0, m)_{\PP^1_L} - (1, m)_{\PP^1_L}$ is in the kernel of
$r_*$, we may choose $\alpha \in C_1(\PP^1_L; M, q)$ with $\bound_{\PP^1_L}
\alpha = (0, m)_{\PP^1_L} - (1, m)_{\PP^1_L}$. Applying $\psi_*$, we find
\begin{multline*}
(z, m)_{X_L} = \psi_*(0, m) \sim \psi_*(0, m) - \bound_{X_L}\psi_*
\alpha 
\sim 
\psi_*(0,m) - \psi_*\bound_{\PP^1_L} \alpha \\ \sim 
\psi_*\left((0,m) - \bound_{\PP^1_L} \alpha\right) \sim \psi_*(1, m) =
(g(z), m)_{X_L}.
\end{multline*}
Now, applying $(p_X)_*$ to each side, we find 
\[(z, m)_X \sim (g(z), c_{L/k} m)\]
But now, we note that we have $\pi(g(z)) = g(\pi(z)) = y$ which implies
that $(g(z), c_{L/k} m) \in i_*(Z_0(X; M, q))$.  as desired.
\end{proof}

\section{Isotropic Grassmannians}

Throughout the section, we fix a field $k$ of characteristic not $2$.
Suppose we have a regular quadratic form $\phi$ over $k$. We recall that the
\textit{splitting pattern} of $\phi$, introduced and studied in
\cite{Kneb:GS1,Kneb:GS2,HuRe:SPEQF}, is defined to be the collection of all
possible Witt indices which $\phi$ attains upon field extensions of $k$. We
write these as
\[ i(\phi) = i_0 < i_1 < \cdots < i_h = \left\lfloor \frac{\dim \phi}{2}
\right\rfloor, \]
and we call $h = h(\phi)$ the \textit{height} if the form $\phi$.

\begin{lem} \label{quadric fibers}
Let $(V, \phi)$ be a quadratic space and let $W < V$ be a totally isotropic
subspace. Choose a hyperbolic space $H$ of dimension $2 \dim W$ containing
$W$, and write $V = H \perp K$. Then we have a natural bijection of sets
\[\{ W < U < V \mid \text{$U$ totally isotropic}\} \leftrightarrow
\{P < K \mid P \text{ totally isotropic}\} 
\]
given by $U \mapsto U \cap K$ and $P \mapsto P + W$.
\end{lem}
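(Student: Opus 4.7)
The plan is to verify the two stated assignments are well-defined maps and that they are mutually inverse; essentially everything reduces to elementary linear algebra on the decomposition $V = H \perp K$ together with the fact that $W$ sits as a Lagrangian in the hyperbolic space $H$.

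For well-definedness, the forward map is trivial: if $U$ is totally isotropic then so is any subspace, in particular $U \cap K$. For the backward map, $W$ is totally isotropic by hypothesis, $P$ is totally isotropic by assumption, and $\phi(P, W) = 0$ since $P \subset K$ and $W \subset H$ are in orthogonal summands. Hence $P + W$ is totally isotropic and clearly contains $W$.

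The easier direction of the bijectivity is $(P + W) \cap K = P$: given $v = p + w$ with $p \in P \subset K$ and $w \in W$, the element $w = v - p$ lies in both $K$ and $H$, and since $V = H \perp K$ forces $H \cap K = 0$, we conclude $w = 0$ and $v = p$.

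The main obstacle, and the only step requiring the hypothesis that $U$ be totally isotropic, is the identity $(U \cap K) + W = U$ for $U$ totally isotropic with $W < U$. The inclusion $\supset$ is immediate. For $\subset$, I would exploit the hyperbolic structure of $H$: since $\dim H = 2 \dim W$ and $W$ is totally isotropic in $H$, one may choose a Lagrangian complement $W' < H$ so that $H = W \oplus W'$ with the pairing $\phi \colon W \times W' \to k$ non-degenerate. Given $u \in U$, decompose via $V = H \perp K = W \oplus W' \oplus K$ as $u = w + w' + p$. Since $w \in W \subset U$, the vector $w' + p$ lies in $U$, and because $U$ is totally isotropic containing $W$, we must have $\phi(w' + p, W) = 0$. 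But $\phi(p, W) = 0$ since $K \perp H$, so $\phi(w', W) = 0$, and non-degeneracy of the Lagrangian pairing forces $w' = 0$. Thus $u = w + p$ with $w \in W$ and $p = u - w \in U \cap K$, completing the proof.
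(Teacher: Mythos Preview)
Your proof is correct and follows essentially the same strategy as the paper's: verify the two maps are well-defined, check one composite is the identity, and then show $(U\cap K)+W=U$ by proving any $u\in U$ has trivial component outside $W\oplus K$. The only difference is cosmetic: where you pick a Lagrangian complement $W'\subset H$ and use non-degeneracy of the pairing $W\times W'\to k$ to kill the $W'$-component, the paper simply observes that $U\subset W^{\perp}=W\perp K$ (since $U$ is totally isotropic and contains $W$), which immediately gives the decomposition $u=w+u'$ with $u'\in K$ without ever introducing $W'$.
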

\begin{proof}
Clearly both maps are well defined and the composition 
\[P \mapsto P+W \mapsto (P+W) \cap K = P\]
shows that the map $P \mapsto P+W$ is injective with the other map as a one
sided inverse. It therefore suffices to show that the map $P \mapsto P+W$
is surjective.

Suppose that $W < U < V$ with $U$ totally isotropic, and let $u \in U$. It
suffices to show that we may write $u = u' + w$ where $w \in W$ and $u' \in
U \cap K$. 

Since $U$ is totally isotropic and $W \subset U$, it follows
that $U \subset W^{\perp} = W \perp K$. We may therefore write $u = u' + w$
where $w \in W$ and $u' \in K$. But since $w \in W \subset U$ and $u \in
U$, it follows that $u' \in U$ and hence $u' \in K \cap U$ as desired.
\end{proof}
 
\begin{prop} \label{no torsion small index}
Let $\phi$ be a regular quadratic form, and suppose that $2 i(\phi) + 2 \leq \dim
\phi$. Then $\ov A_0(X_{i(\phi) + 1}, \mbb Z) = 0$.
\end{prop}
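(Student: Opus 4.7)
The plan is to reduce, via two projections from a partial flag variety, to the classical fact that the degree map on $CH_0$ is injective for any smooth projective quadric. Let $F$ be the partial flag variety parametrizing pairs $(W, U)$ of totally isotropic subspaces of $\phi$ with $W \subset U$, $\dim W = i(\phi)$, and $\dim U = i(\phi)+1$, and let $\pi_X : F \to X_{i(\phi)+1}$ and $\pi_Y : F \to X_{i(\phi)}$ be the forgetful projections.

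First, each fiber of $\pi_X$ is the Grassmannian of $i(\phi)$-dimensional subspaces of the tautological $(i(\phi)+1)$-dimensional isotropic subspace, hence isomorphic to $\PP^{i(\phi)}$. So $\pi_X$ has rational fibers, and Corollary \ref{rational fibers} identifies $A_0(F, \ZZ) \cong A_0(X_{i(\phi)+1}, \ZZ)$, reducing the task to $\ov A_0(F, \ZZ) = 0$. Next, since $i(\phi)$ is the Witt index of $\phi$ over $k$, the variety $X_{i(\phi)}$ has a $k$-point $y_0$ corresponding to some totally isotropic $W_0$ of dimension $i(\phi)$. Taking $G = SO(\phi)$---a $k$-rational linear algebraic group (via a Cayley-type parametrization) acting on both $F$ and $X_{i(\phi)}$ in a way making $\pi_Y$ equivariant---the inequality $2i(\phi)+2 \le \dim\phi$ together with Witt's extension theorem guarantees that $G(L)$ acts transitively on $X_{i(\phi)}(L)$ for every field extension $L/k$. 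Proposition \ref{homogeneous chow trivial} then yields a surjection $A_0(F_{y_0}, \ZZ) \twoheadrightarrow A_0(F, \ZZ)$, which, since the closed immersion $F_{y_0} \hookrightarrow F$ is compatible with pushforward to $\Spec k$, restricts to a surjection $\ov A_0(F_{y_0}, \ZZ) \twoheadrightarrow \ov A_0(F, \ZZ)$.

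The fiber $F_{y_0}$ is identified by Lemma \ref{quadric fibers}: choosing a hyperbolic space $H$ of dimension $2i(\phi)$ containing $W_0$ and writing the resulting Witt decomposition $\phi \cong \mathbb H^{i(\phi)} \perp \phi_{an}$, the lemma identifies $F_{y_0}$ with the projective quadric of the anisotropic part $\phi_{an}$, which is non-empty because $2i(\phi)+2 \le \dim\phi$. Thus everything reduces to showing that $\ov A_0$ of a smooth anisotropic projective quadric vanishes.

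The main obstacle---and indeed the only non-formal input---is this last step: that the degree map $CH_0(Q) \to \ZZ$ is injective for every smooth projective quadric $Q$. This is classical (available via Swan's $K$-theoretic computation for quadrics, or via Karpenko's work on their Chow motives) and should be cited rather than reproved. A secondary point is the $k$-rationality of $SO(\phi)$ required to apply Proposition \ref{homogeneous chow trivial}; this is standard for $\dim\phi \ge 3$, and the only case excluded by this bound is degenerate enough that the preliminary reductions collapse and the statement becomes the quadric case itself.
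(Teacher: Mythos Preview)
Your proof is correct and follows essentially the same route as the paper's: introduce the flag variety $X_{i(\phi),\,i(\phi)+1}$, use the rational-fibered projection to $X_{i(\phi)+1}$ to identify their $A_0$ groups, then apply Proposition~\ref{homogeneous chow trivial} with $G = SO(\phi)$ over the rational point of $X_{i(\phi)}$, and finish by invoking Swan's theorem on the quadric fiber. The only cosmetic differences are that the paper phrases the first reduction via Proposition~\ref{fiber induction proposition} (tracking $t_0$) rather than Corollary~\ref{rational fibers}, and is less explicit than you are about the rationality of $SO(\phi)$ and the transitivity of its action; your added care on these points is welcome but does not change the argument.
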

\begin{proof}
Set $X = X_{i(\phi) + 1}, Y = X_{i(\phi)}$ and let $\ms X = X_{i(\phi), i(\phi) + 1}$
be the variety of isotropic flags of dimension $i(\phi), i(\phi) + 1$. Note that
he statement of the Proposition is equivalent to the statement that $t_0(X,
\mbb Z) = \mbb Z$. We have natural projection maps:
\[\xymatrix{
	\ms X \ar[r]^{\pi_1} \ar[d]_{\pi_2} & Y \\
	X.
}\]
We may identify the fibers of $\pi_2$ over an isotropic plane $W < V$ with
the Grassmannian of $i(\phi)$-dimensional subspaces of $W$. In particular,
each of the fibers of $\pi_2$ is $R$-trivial. We therefore have $t_0(\pi_2,
\mbb Z) = \mbb Z$ and $\ind(\pi_2) = 1$. By Proposition~\ref{fiber
induction proposition}, we may conclude that $t_0(\ms X, \mbb Z) = t_0(X,
\mbb Z)$.

By the definition if $i(\phi)$, $Y$ has a rational point $y \in Y(k)$. By
Lemma~\ref{quadric fibers}, we may identify the fiber $\ms X_y$ with a
quadric hypersurface. By \cite{Swan} or \cite[Thm~8.8]{Kra:ZC}, we have
$\ov A_0(\ms X_y, \mbb Z) = 0$ and therefore by
Proposition~\ref{homogeneous chow trivial} using $G = SO(\phi)$, we
conclude $\ov A_0(\ms X, \mbb Z) = 0$, completing the proof.
\end{proof}

\begin{lem} \label{torsion no jump}
Let $\phi$ be a regular quadratic form over $k$, and suppose that $m$ is a
positive integer with $i_j < m \leq i_{j+1}$ for some $j < h(\phi)$. Then
for any cycle module $M$, we have $t_0(X_m(\phi); M, q) =
t_0(X_{i_{j+1}}(\phi); M, q)$.
\end{lem}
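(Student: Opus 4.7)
The plan is to reduce the statement to consecutive Grassmannians: for each $m'$ with $i_j < m' < i_{j+1}$, I would establish $t_0(X_{m'}(\phi); M, q) = t_0(X_{m'+1}(\phi); M, q)$, and then chain these equalities from $m$ up to $i_{j+1}$ (the case $m = i_{j+1}$ is tautological).

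For a fixed $m'$ in this range, I would work with the flag variety $\ms X := X_{m', m'+1}(\phi)$ together with its natural projections $\pi_1 : \ms X \to X_{m'+1}(\phi)$ and $\pi_2 : \ms X \to X_{m'}(\phi)$, applying Corollary~\ref{rational fibers} to each. For $\pi_1$, the fiber over any $L$-point of $X_{m'+1}$ is the ordinary Grassmannian of $m'$-dimensional subspaces in a fixed $(m'+1)$-dimensional $L$-vector space, namely $\mbb P^{m'}_L$, which is rational. Corollary~\ref{rational fibers} then gives $t_0(\ms X; M, q) = t_0(X_{m'+1}; M, q)$.

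The content lies in the analysis of $\pi_2$. By Lemma~\ref{quadric fibers}, the fiber over an $L$-point $z$ of $X_{m'}$ is identified with $X_1(\phi_{K,L})$, the quadric of isotropic lines in the residual form $\phi_{K,L}$ of dimension $\dim \phi - 2m'$ obtained by splitting off a hyperbolic space of dimension $2m'$ containing the isotropic subspace parametrized by $z$. For Corollary~\ref{rational fibers} to apply, I need this quadric to be rational over $L$, which holds precisely when $\phi_{K,L}$ is isotropic. Here the splitting-pattern hypothesis is decisive: the existence of $z$ forces $i(\phi_L) \geq m'$, but every Witt index realized by $\phi$ over a field extension lies in the splitting pattern $\{i_0, \ldots, i_h\}$, so the bound $m' > i_j$ pushes up to $i(\phi_L) \geq i_{j+1}$. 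Combined with $m' < i_{j+1}$ strict, this yields $i(\phi_L) \geq m' + 1$, hence $\phi_{K,L}$ has Witt index at least $1$ and $X_1(\phi_{K,L})$ is an isotropic quadric, rational over $L$ by stereographic projection. Corollary~\ref{rational fibers} then gives $t_0(\ms X; M, q) = t_0(X_{m'}; M, q)$, and combining with the previous equality yields the single-step comparison.

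The main obstacle is finding the correct reduction. A more direct attempt via the flag $X_{m, i_{j+1}}(\phi)$ would introduce fibers of the form $X_{i_{j+1}-m}(\phi_{K,L})$ over $X_m$, which generally fail to be rational once $i_{j+1}-m \geq 2$ (even though they still admit $L$-rational points), so Corollary~\ref{rational fibers} does not apply. Stepping one index at a time keeps every fiber either a projective space or an isotropic quadric with a rational point, both of which are unambiguously rational, and the whole lemma reduces to two applications of the rational-fibers corollary per step.
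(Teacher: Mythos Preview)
Your stepwise argument is correct: at each stage the $\pi_1$-fibers are projective spaces and the $\pi_2$-fibers are isotropic quadrics (by the splitting-pattern jump), so two applications of Corollary~\ref{rational fibers} give $t_0(X_{m'};M,q)=t_0(X_{m'+1};M,q)$, and chaining yields the lemma.

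However, your final paragraph misidentifies the obstacle, and in doing so dismisses exactly the route the paper takes. The paper works directly with the single flag $X_{m,i_{j+1}}(\phi)$ and its two projections. The fibers over $X_{i_{j+1}}$ are ordinary Grassmannians $G(m,i_{j+1})$, hence rational. The fibers over an $L$-point of $X_m$ are, as you note, isotropic Grassmannians $X_{i_{j+1}-m}(\phi_{K,L})$; the splitting-pattern argument shows these have an $L$-rational point. Your claim that such varieties ``generally fail to be rational once $i_{j+1}-m\geq 2$'' is false: a projective homogeneous variety with a rational point is of the form $G/P$ with $P$ a $k$-parabolic, and the big Bruhat cell (the unipotent radical of the opposite parabolic) is an affine space over $k$, so $G/P$ is rational. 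Thus Corollary~\ref{rational fibers} applies directly to both projections of $X_{m,i_{j+1}}(\phi)$ and the paper finishes in one step.

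So both approaches work; the paper's is shorter, while yours avoids invoking rationality of higher isotropic Grassmannians at the cost of an induction. The actual content --- the splitting-pattern jump forcing the fibers to have rational points --- is the same in each.
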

\begin{proof}
By Lemma~\ref{quadric fibers}, we may identify the fibers of the morphism
$X_{m, i_{j+1}}(\phi) \to X_m$ with an isotropic Grassmannian of a
quadratic form Witt equivalent to $\phi$, and by the
definition of the splitting pattern, each of these fibers is nonempty and
hence rational. We also note that each of the fibers of the map $X_{m,
i_{j+1}}(\phi) \to X_{i_{j+1}}(\phi)$ can be identified with a Grassmannian
$G(m, i_{j+1})$, and hence are also rational. The result therefore
follows from Corollary~\ref{rational fibers}.
\end{proof}

\begin{thm}
Let $\phi$ be a regular quadratic form over $k$, and suppose that $m$ is a
positive integer with $i_j < m \leq i_{j+1}$ for some $0 \leq j < h(\phi)$. Then
$\ov A_0(X_m(\phi), \mbb Z)$ is $2^j$-torsion.
\end{thm}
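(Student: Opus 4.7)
My plan is to reduce to the case $m = i_{j+1}$ via Lemma~\ref{torsion no jump}, then induct on $j$. The base case $j = 0$ follows from Proposition~\ref{no torsion small index} applied to $X_{i_0 + 1}(\phi)$: the hypothesis $2i_0 + 2 \leq \dim\phi$ is automatic since $j < h$ forces $i_0 + 1 \leq i_h \leq \lfloor \dim\phi / 2\rfloor$. Lemma~\ref{torsion no jump} then extends this to $\ov A_0(X_m, \mbb Z) = 0$ for all $m$ with $i_0 < m \leq i_1$, which is the desired $2^0$-torsion.

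For the inductive step $j \geq 1$, the strategy is to imitate the structure of the proof of Proposition~\ref{no torsion small index}, but choose the flag pair so that a projective quadric appears as the fiber of a flag projection --- which is what will contribute the decisive factor of $2$. Concretely, set $Y = X_{i_j}(\phi)$ and $Z = X_{i_j,\, i_j+1}(\phi)$, with the two natural projections $\pi_1 : Z \to Y$ and $\pi_2 : Z \to X_{i_j + 1}(\phi)$. Since $\pi_2$ has ordinary Grassmannian fibers $G(i_j, i_j + 1) \cong \mbb P^{i_j}$, Corollary~\ref{rational fibers} gives $\ov A_0(Z, \mbb Z) \cong \ov A_0(X_{i_j+1}(\phi), \mbb Z)$, and Lemma~\ref{torsion no jump}, applied in the range $i_j < i_j + 1 \leq i_{j+1}$, identifies the latter's $t_0$ with $t_0(X_{i_{j+1}}(\phi); \mbb Z)$. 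It therefore suffices to show $2^j \in t_0(Z; \mbb Z)$.

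By Lemma~\ref{quadric fibers}, the fiber of $\pi_1$ over a point $y$ with residue field $L$ is canonically the projective quadric $X_1(\phi|_K)$ associated to the restriction of $\phi_L$ to a complement $K$ of a hyperbolic enlargement of the prescribed isotropic subspace. These fibers are positive-dimensional quadric hypersurfaces in the generic range (the only potential exception being the degenerate edge case $\dim\phi$ even, $j = h - 1$), so \cite{Swan} (see also \cite[Thm.~8.8]{Kra:ZC}) forces $\ov A_0$ of every closed-point fiber to vanish, giving $t_0(\pi_{1,(0)}; \mbb Z) = \mbb Z$; and since any projective quadric becomes isotropic over an at most quadratic extension of its base field, one has $\ind_1(\pi_1) \mid 2$. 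Combining these bounds with the inductive hypothesis $2^{j-1} \in t_0(X_{i_j}(\phi); \mbb Z) = t_0(Y; \mbb Z)$ and invoking Proposition~\ref{fiber induction proposition} gives
\[2^{j-1} \cdot 1 \cdot 2 \in t_0(Y; \mbb Z) \cdot t_0(\pi_{1,(0)}; \mbb Z) \cdot \ind_1(\pi_1) \subset t_0(Z; \mbb Z) = t_0(X_{i_{j+1}}(\phi); \mbb Z),\]
which completes the induction.

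I expect the main obstacle to be the degenerate edge case $\dim\phi$ even with $j = h - 1$: there $\dim\phi - 2i_j = 2$, the fiber quadric drops to dimension zero, and closed points $y \in Y$ at which $\phi_{k(y)}$ happens to have Witt index strictly greater than $i_j$ produce hyperbolic binary fibers with torsion-free reduced zero-cycles --- this trivializes the relevant closed-point intersection and demands a separate argument, perhaps by passing first to $L = k(X_{i_j}(\phi))$ and applying Proposition~\ref{homogeneous chow trivial} to the tautological rational point of $Y_L$ (where the fiber is forced to remain anisotropic) followed by a descent back to $k$, or by substituting a different intermediate flag pair when the standard choice degenerates.
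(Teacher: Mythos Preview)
Your argument is essentially identical to the paper's: the same base case via Proposition~\ref{no torsion small index}, the same flag variety $Z=X_{i_j,i_j+1}$, the same use of Corollary~\ref{rational fibers} on the Grassmannian-fibered projection and of Proposition~\ref{fiber induction proposition} on the quadric-fibered projection $\pi_1:Z\to X_{i_j}$, with Lemma~\ref{torsion no jump} passing between $X_{i_j+1}$ and $X_{i_{j+1}}$.

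The edge case you isolate is genuine, and it is worth noting that the paper's own proof does not address it either: the paper simply asserts ``$t(\pi,\mbb Z)=(1)$'' after identifying the fibers as quadric hypersurfaces, implicitly invoking \cite{Swan}. But when $\dim\phi$ is even and $i_j=i_h-1$, the fiber of $\pi_1$ over a closed point $y$ is the $0$-dimensional quadric of a binary form, and if $\phi_{k(y)}$ is hyperbolic that fiber is two rational points with $\ov A_0\cong\mbb Z$, so $t_0$ of that fiber is zero and the intersection defining $t_0(\pi_{1,(0)};\mbb Z)$ collapses. Your diagnosis is therefore sharper than the paper's treatment. Of your two suggested repairs, the first (pass to $L=k(X_{i_j})$ and use Proposition~\ref{homogeneous chow trivial} at the tautological point, where the residual binary form is forced to be anisotropic) is closest in spirit to how the paper handles the analogous issue inside the proof of Proposition~\ref{no torsion small index}, though one must then supply a descent step from $\ov A_0$ over $L$ back to $\ov A_0$ over $k$.
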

\begin{proof}
We prove this by induction on $j$, starting with $j = 0$. In this case, we
have $i_0(\phi) = i(\phi) = i$, and by Proposition~\ref{no torsion small
index}, we have $\ov A_0(X_{i + 1}, \mbb Z) = 0$. Consequently, $t(X_{i+1},
\mbb Z) = (1)$. By Lemma~\ref{torsion no jump}, $t(X_{i+1}, \mbb Z) =
t(X_{i_1}, \mbb Z) = t(X_m, \mbb Z)$, which implies $t(X_m, \mbb Z) = (1)$,
and $\ov A_0(X_m, \mbb Z) = 0$ as claimed.

For the induction step, suppose that $\ov A_0(X_{i_j}(\phi), \mbb Z)$ is
$2^{j-1}$-torsion. Considering the morphism $\pi: X_{i_j, i_j + 1} \to
X_{i_j}$, we find that the fibers are quadric hypersurfaces by
Lemma~\ref{quadric fibers}. In particular, $t(\pi, \mbb Z) = (1)$, and
$\ind(\pi) | 2$. We therefore have $2^j \in 2 t(X_{i_j}) \subset t(X_{i_j,
i_j + 1})$. But since $X_{i_j, i_j + 1} \to X_{i_j + 1}$ has fibers
isomorphic to Grassmannians $G(i_j, i_j + 1)$ which are therefore rational,
it follows from Corollary~\ref{rational fibers} that $t(X_{i_j + 1}, \mbb
Z) = t(X_{i_j, i_{j+1}})$, showing that $2^j \in t(X_{i_j + 1})$ as
claimed.
\end{proof}

\section{Generalized Severi-Brauer varieties}

Let $A$ be a central simple algebra of degree $n$. Given integers $1 \leq
n_1 < n_2 < \cdots < n_k < n$, we set $X_{n_1, \ldots, n_k}(A)$ to be the
Brauer-Severi flag variety parametrizing flags of right ideals $I_{1}
\subset \cdots \subset I_{k} \subset A$, where $\dim I_{i} = n n_i$ (see
\cite[Def.~4.1]{Kra:ZC} for a functorial description of this variety). For
a right ideal $I < A$, and for a positive integer $m$ with $m n < \dim I$,
we set $X_m(I)$ to be the variety of right ideals of $A$ of dimension $m n$
contained in $I$. We recall from \cite[Thm.~4.8]{Kra:ZC} that $X_m(I) \cong
X_m(D)$ for some central simple algebra $D$ of degree $\dim I$, Brauer
equivalent to $A$. It follows from Lemma~\ref{rat tor} that $A_0(X_{n_1,
\ldots, n_k}(A); M, q) = A_0(X_d(A); M, q)$ for any cycle module $M$, where
$d$ is the $\gcd$ of the integers $n_i$ and the index of $A$. In
particular, in understanding $t(X_{n_1, \ldots, n_k}(A))$ it suffices to
consider Brauer-Severi flag varieties of the form $X_d(A)$ with $d |
\ind(A)$. 

\begin{thm}
Let $A$ be a central simple algebra of period $2$ and index $2m$. Then for
$d | m$, we have $\ov A_0(X_{2d}(A), \mbb Z)$ is $m$-torsion.
\end{thm}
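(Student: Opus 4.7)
Since $A$ has period $2$, a classical theorem forces $\ind A = 2m$ to be a power of $2$, so $m = 2^{j-1}$ for some $j \geq 1$. My plan is to prove the theorem by induction on $m$ (equivalently on $j$), closely following the template used for isotropic Grassmannians in the previous section, in which each application of a ``conic-like'' fibration contributes one factor of $2$ to the torsion bound.

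The base case $m = 1$ asks that $\ov A_0(X_2(A), \mbb Z) = 0$ for $A$ of period and index $2$, i.e., $A \cong M_\ell(Q)$ for some quaternion algebra $Q$. Here $X_2(A)$ is a twisted Grassmannian parametrizing copies of the simple right $A$-module; for $\ell = 2$ it coincides with the Severi-Brauer conic of $Q$, whose $\ov A_0$ vanishes by the standard argument for conics, and for larger $\ell$ one reduces to that case via flag projections with Grassmannian (hence rational) fibers and Corollary~\ref{rational fibers}.

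For the inductive step, given $A$ of period $2$ and index $2m$, I would produce a flag fibration $\pi : X_{2m', 2m}(A) \to X_{2m}(A)$ with $m' = m/2$, playing the role analogous to the quadric fibration $X_{i_j,i_j+1}(\phi) \to X_{i_j}(\phi)$ in the previous section. By \cite[Thm.~4.8]{Kra:ZC}, the fiber of $\pi$ over $[I]$ is isomorphic to $X_{2m'}(D_I)$ for $D_I$ a central simple algebra of degree $2m$ and period $2$ Brauer-equivalent to $A_{k([I])}$. At the generic point of $X_{2m}(A)$, the index of $A$ drops below $2m$ (this index drop is the central feature that distinguishes the period-$2$ case), so $D_I$ has index strictly less than $2m$ and the induction hypothesis applies to the fiber, producing a factor-of-$m'$ bound on its Chow group of zero cycles. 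Combining this fiber-wise bound with the fibration index $\ind_1(\pi)$ via Proposition~\ref{fiber induction proposition} yields the containment $m \in t(X_{2m}(A), \mbb Z)$, which is the desired $m$-torsion statement for $d = m$. For the remaining cases $d \mid m$ with $d < m$, one uses the flag projection $X_{2d, 2m}(A) \to X_{2d}(A)$, whose fibers are classical Grassmannians, and invokes Corollary~\ref{rational fibers} to transport the $m$-torsion bound from $X_{2m}(A)$ to $X_{2d}(A)$.

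The main obstacle is verifying that the numerical invariants combine to exactly $m$, rather than the weaker $2m$ (which is what one would get from the naive restriction-corestriction argument). In particular, one needs a careful bookkeeping of $\ind_1(\pi)$ together with the fiber torsion produced by the induction hypothesis, exploiting that the generic fiber sees an algebra of genuinely smaller index. This mirrors how, in the isotropic Grassmannian argument, one must track the factor of $2$ contributed by each quadric fibration together with the rational-fiber reductions between adjacent flag varieties.
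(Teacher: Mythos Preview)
Your induction is organized around the variety $X_{2m}(A)$, but since $\ind A = 2m$ divides $2m$, this variety already has a $k$-rational point and is therefore rational. Two things go wrong as a result. First, the ``index drop'' you invoke at the generic point of $X_{2m}(A)$ does not occur: the function field of a rational variety is purely transcendental over $k$, so $\ind A_{k(X_{2m}(A))} = \ind A = 2m$, and the induction hypothesis cannot be fed to the fibers of $X_{2m',2m}(A)\to X_{2m}(A)$. Second, the case $d=m$ of the theorem is vacuous, so the entire content has been pushed into your ``transport'' step --- and that step is also incorrect. The fibers of $X_{2d,2m}(A)\to X_{2d}(A)$ are not classical Grassmannians: over a point $[J]$ the fiber is a generalized Severi--Brauer variety $X_{2m-2d}(D)$ with $D$ Brauer-equivalent to $A_{k([J])}$, which is rational only when $A_{k([J])}$ is split. (If you reduce first to $\deg A = 2m$, then $X_{2m}(A)$ is a single point and $X_{2d,2m}(A)\cong X_{2d}(A)$, so the transport becomes a tautology.) In the quadric analogy you are following, the role of $X_{i_j+1}(\phi)$ is played by a variety with \emph{no} rational point; the correct analogue here would be some $X_r(A)$ with $\ind A \nmid r$, not $X_{2m}(A)$.

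The paper's argument is not inductive at all. It takes as a black box from \cite{Kra:ZC} that $\ov A_0(X_2(A),\mbb Z)=0$ for \emph{every} algebra $A$ of period $2$, regardless of index. One projection $X_{2,2d}(A)\to X_2(A)$ has rational fibers (because over any point of $X_2(A)$ the algebra has index dividing $2$), giving $\ov A_0(X_{2,2d}(A),\mbb Z)=0$. The other projection $X_{2,2d}(A)\to X_{2d}(A)$ has fibers of the form $X_2(D)$, whose index divides $m$ by \cite[Lem.~7.1]{Kra:ZC}. A single application of Proposition~\ref{fiber induction proposition} then yields $m\in t(X_{2d}(A),\mbb Z)$. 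The factor of $m$ arises from one index computation, not from an accumulation of factors of $2$ along a tower.
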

We note that since $\ind(A)$ and $\per(A)$ always have the same prime
factors, it follows that $m$ is a power of $2$.
\begin{proof}
Consider the natural projection morphisms
\[\xymatrix{
 & X_{2, 2d}(A) \ar[rd]^{\pi_1} \ar[ld]_{\pi_2} \\
X_{2d}(A) & & X_2(A).
}\]
To examine the fibers of $\pi_1$, we may switch to the opposite algebra
$A^{op}$ and consider the isomorphisms $X_{2, 2d}(A) = X_{2m - 2d, 2m -
2}$, $X_2(A) = X_{2m - 2}(A^{op})$, given by taking right ideals to their
left annihilators. In particular, we find that we may identify the fibers
of $\pi_1$ with varieties of the form $X_{2m-2d}(I) = X_{2m-2d}(D)$ for an
algebra $D$ of degree $2m - 2$ and index $2$. In particular, since they are
projective homogeneous varieties with rational points, the fibers of
$\pi_1$ are rational trivial, and hence $t(X_2(A), \mbb Z) = t(X_{2d, 2},
\mbb Z)$ by Corollary~\ref{rational fibers}.

On the other hand, we may identify the fibers of $\pi_2$ with varieties of
the form $X_2(D)$ where $D$ is a central simple algebra of degree $2m$. In
particular, $t(X_2(D), \mbb Z) = (1)$ by \cite[Thm.~7.3]{Kra:ZC} and
$\ind(X_2(D), \mbb Z) = m$ by \cite[Lem.~7.1]{Kra:ZC}. It therefore follows
from Proposition~\ref{fiber induction proposition}, that $\ov A_0(X_{2,
2m})(A)$ and hence $\ov A_0(X_{2d}(A))$ is $2^m$-torsion as desired.
\end{proof}

\def\cprime{$'$} \def\cprime{$'$} \def\cprime{$'$} \def\cprime{$'$}
  \def\cprime{$'$} \def\cprime{$'$} \def\cprime{$'$}
  \def\cftil#1{\ifmmode\setbox7\hbox{$\accent"5E#1$}\else
  \setbox7\hbox{\accent"5E#1}\penalty 10000\relax\fi\raise 1\ht7
  \hbox{\lower1.15ex\hbox to 1\wd7{\hss\accent"7E\hss}}\penalty 10000
  \hskip-1\wd7\penalty 10000\box7}

\end{document}